\documentclass[11pt]{amsart}

\usepackage[margin=2.5cm]{geometry}

\usepackage[utf8]{inputenc}

\usepackage{amsmath}
\usepackage{amssymb}
\usepackage{amsthm}
\usepackage{enumitem}
\setlist[enumerate]{label=(\arabic*)}

\newcommand{\R}{\mathbb{R}}

\newtheorem{definition}{Definition}[section]

\newtheorem{remark}[definition]{Remark}
\newtheorem{lemma}[definition]{Lemma}

\newtheorem*{MainTheorem}{Main Theorem}


\title{On a generalization of a theorem of S. Bernstein}

\author{J. Ciesielski}
\author{J. Maksymiuk}
\author{M. Starostka}

\address{
\noindent Faculty of Applied Physics and Mathematics\\
Gda\'{n}sk University of Technology\\
Narutowicza 11/12, 80-233 Gda\'{n}sk, Poland
}

\email{jakub.ciesielski@pg.edu.pl, jakub.maksymiuk@pg.edu.pl, maciej.starostka@pg.edu.pl}

\thanks{M. Starostka was partially supported by Grants Beethoven2 and Preludium9 of the
National Science Centre, Poland, no. 2016/23/G/ST1/04081 and no. 2015/17/N/ST1/02527}

\begin{document}

\begin{abstract} 
In this paper we obtain a solution to the second order boundary value problem of the form
$\frac{d}{dt}\Phi'(\dot{u})=f(t,u,\dot{u}),\ t\in[0,1],\ u\colon\R\to\R$ with
Dirichlet and Sturm-Liouville boundary conditions, where $\Phi\colon\R\to\R$ is
strictly convex, differentiable function and
$f\colon[0,1]\times\R\times\R\to\R$ is continuous and satisfies a suitable
growth condition. Our result is based on a priori bounds for the solution and homotopical invariance
of the Leray-Schauder degree.
\end{abstract}


\maketitle

\section{Introduction}

Our purpose is to show the existence of solutions to second order boundary value problems of the
form
\begin{equation}
\label{eq:problem}\tag{P}
\begin{cases}
\frac{d}{dt}\Phi'(\dot{u})=f(t,u,\dot{u}),\ t\in[0,1]
\\
u\in (BC)
\end{cases}
\end{equation}
where $u\in (BC)$ means that $u$ satisfies either Dirichlet or Sturm-Liouville boundary conditions,
$\Phi'$ is an increasing homeomorphism satisfying some technical assumptions and $f$ is a continuous
function satisfying suitable growth conditions.

In particular,  $\Phi(x)= \frac{1}{p_1}|x|^{p_1} + \ldots \frac{1}{p_n}|x|^{p_n}$, $1 < p_i \leq 2$
is in the considered class of functions and if $n = 1$ then the differential operator on the left
hand side of the equation is a $p$-Laplacian.

To prove the existence we use topological methods. This approach was already used by many authors.
In \cite{GraGueLee78} and \cite{FriORe88} the authors consider the case of a Laplace operator with
various
boundary conditions. Generalizations to the $p$-Laplacian and to the operator defined by an
arbitrary
increasing homeomorphism were developed in \cite{KelTer13} and \cite{MaZhaLiu15}, respectively.
However, in \cite{KelTer13} and \cite{MaZhaLiu15} authors subject the equation to very specific
boundary conditions, namely $u(0) = A$, $\dot{u}(1) = B$. In order to show the existence for
Dirichlet and general Sturm-Liouville conditions more effort has to be put in as can be seen below.
Moreover, we consider different assumptions on the function $f$.

The main idea in the orginal paper \cite{GraGueLee78} was to use the topological transversality
theorem.
This is a fixed point type theorem (see \cite{DugGra10}). We decided to use an approach via
Leray-Schauder degree theory instead since it is essentially equivalent but the degree theory is
familiar to a broader audience.

\section*{Acknowledgements}
The authors would like to thank Professor Andrzej Granas for suggesting this problem during the
seminar in Gdańsk in 2016.

\section{Preliminaries}

In this section we are more precise on the assumptions on functions $\Phi$ and $f$ occurring in the
problem \ref{eq:problem}. We also state the main theorem.

We assume that $\Phi\colon \R \to \R$ satisfies
\begin{enumerate}[label=$(\Phi_\arabic*)$,ref=($\Phi_\arabic*$)]
\item \label{asm:Phi:convex} $\Phi$ is strictly convex, differentiable and $\Phi(x)/|x|\to \infty$
as $|x|\to \infty$,
\item $\Phi(0) = \Phi'(0) = 0$,
\item \label{asm:Phi:psi_differentiable}
$(\Phi')^{-1}$ is continuously differentiable,
\item
\label{asm:Phi:nabla2}
there exists a constant $k_\Phi>1$ such that
\[
k_\Phi\Phi(x)\leq \Phi'(x) x\quad \text{for all $x\in \R$}
\]
\end{enumerate}

Assumption \ref{asm:Phi:convex} guarantees that $\Phi'$ is an increasing homeomorphism and so
$(\Phi')^{-1}$ exists. However, we will also need that it is continuously differentiable
\ref{asm:Phi:psi_differentiable}. We put $\varphi = \Phi'$, $\psi = \varphi^{-1}$ and $L_\varphi u =
\frac{d}{dt}\varphi(\dot{u})$. The domain of $L_\varphi$ will be defined later. As already mentioned
in
the introduction, $\Phi(x) = |x|^p$ satisfies the above assumptions and in this case $L_\varphi$ is
just
a $p$-Laplacian. A more general example of $\Phi$ is provided by an N-function satisfying the
$\nabla_2$-condition (see. \cite{KraRut61}).

We assume that $f\colon [0,1]\times \R\times \R \to \R$ is continuous and satisfies
\begin{enumerate}[label=$(f_\arabic*)$, ref=$(f_\arabic*)$]
\item
\label{asm:f:geometric}
there exists a constant $R>0$ such that
\[
x f(t,x,0) > 0 \text{ for $|x|>R$}
\]
\item
\label{asm:f:growth}
there exist positive functions $S$, $T$, bounded on bounded sets such that
\[
|f(t,x,v)|\leq S(t,x) (\Phi'(v)\cdot v-\Phi(v))  + T(t,x)
\]
\end{enumerate}
We consider the following boundary conditions:

\begin{enumerate}[ref=BC$_{\arabic*}$,label=(\arabic*)]
\item \label{BC:Dirichlet-nh}%
Dirichlet
\begin{equation*}
\tag{\ref{BC:Dirichlet-nh}}
    u(0)=A,\ u(1)=B,
\end{equation*}
\item \label{BC:S-L_I-nh}%
Sturm-Liouville
\begin{gather*}
\tag{\ref{BC:S-L_I-nh}}
    -\alpha u(0) + \beta \dot{u}(0)=A,\quad \alpha,\beta >0\\
    a u(1) + b \dot{u}(1)=B,\quad a,b >0.
\end{gather*}
\end{enumerate}
The purpose of the paper is to prove the following existence result.

\begin{MainTheorem}
Suppose $\Phi$ and $f$ satisfy $(\Phi_1)$-$(\Phi_4)$ and $(f_1)$ - $(f_2)$ respectively. Then under
boundary conditions \eqref{BC:Dirichlet-nh} or \eqref{BC:S-L_I-nh} the problem $(\ref{eq:problem})$
has
at least one solution.
\end{MainTheorem}

\section{Proof of the main theorem}
Fix $\Phi$, $f$ and boundary conditions $(BC)$. We will now show that the existence of a solution to
$(\ref{eq:problem})$ is equivalent to the existence of a fixed point of some map on a Banach space.
Let $\hat{K}\colon C^0([0,1]) \times \R \times \R \to C^1([0,1])$ be given by
\[
\hat{K}(v,c_1,c_2)(t) = c_1 + \int_0^t \psi(\int_0^\tau v(s) \, ds + c_2)\, d\tau
\]
For every $v$ we would like to choose $c_1$ and $c_2$ in such a way that $u = \hat{K}(v,c_1,c_2)$
is an element of $C^1_{BC}$, i.e. it satisfies boundary conditions. Moreover, we need that $c_1$ and
$c_2$ depend continuously on $v$.

\begin{remark}
Note that this trivializes in  \cite{KelTer13,MaZhaLiu15}. For boundary conditions considered
therein $c_1$ and $c_2$ are constants independent of $v$. We cannot proceed in such a way here.
\end{remark}

\begin{lemma}\label{lemma:implicitFunction}
Let $X$ be a metric space and let $G\colon X \times \R \to \R$ be continuous.
Suppose that
\begin{enumerate}
\item for every $v \in X$ function $g_v(\cdot) = G(v,\cdot)\colon \R \to \R$ is an increasing
homeomorphism,
\item if $\{v_n\}$ is bounded and $b_n \to \pm\infty$ then $G(v_n,b_n) \to \pm\infty$.
\end{enumerate}
Fix a constant $C \in \mathbb {R}$.  Then the function $c\colon X \to \R$ defined by $G(v,c(v)) = C$
is continuous.
\end{lemma}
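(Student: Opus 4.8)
The plan is a standard subsequence argument that uses local compactness of $\R$ together with hypothesis (2); the only genuine ingredient is an a priori bound on the values of $c$ along a convergent sequence. First note that $c$ is well defined: for each $v\in X$ the map $g_v$ is, by hypothesis (1), a bijection of $\R$, so the equation $G(v,c(v))=C$ has exactly one solution. Fix $v\in X$ and a sequence $v_n\to v$ in $X$; the goal is $c(v_n)\to c(v)$.

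The first and main step is to show that $\{c(v_n)\}$ is bounded in $\R$. If it were not, there would be a subsequence with $c(v_{n_k})\to +\infty$ or $c(v_{n_k})\to -\infty$. Since $v_{n_k}\to v$, the set $\{v_{n_k}\}$ is bounded in $X$, so hypothesis (2) would force $G(v_{n_k},c(v_{n_k}))\to\pm\infty$. This contradicts $G(v_{n_k},c(v_{n_k}))=C$ for every $k$. I expect this boundedness step to be the crux: it is precisely where hypothesis (2) is needed, and it cannot be recovered from continuity and monotonicity of $G$ alone.

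The remaining steps are routine. Given any subsequence of $\{c(v_n)\}$ converging to some $\ell\in\R$, say $c(v_{n_k})\to\ell$, continuity of $G$ gives $C=G(v_{n_k},c(v_{n_k}))\to G(v,\ell)$, hence $G(v,\ell)=C$, and uniqueness (from (1)) yields $\ell=c(v)$. Thus $\{c(v_n)\}$ is a bounded real sequence all of whose convergent subsequences have the common limit $c(v)$, so the whole sequence converges to $c(v)$. Since $v_n\to v$ was an arbitrary convergent sequence, $c$ is continuous. The only point requiring care is to keep the a priori bound honest — it really does rely on (2) — after which everything follows from the uniqueness in (1) and continuity of $G$.
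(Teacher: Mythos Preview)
Your proof is correct and follows essentially the same route as the paper's: use hypothesis (2) to bound $\{c(v_n)\}$, then pass to a convergent subsequence and invoke continuity of $G$ together with the injectivity from (1) to identify the limit as $c(v)$. The paper phrases this as a single contradiction argument rather than the ``every convergent subsequence has the same limit'' characterization, but the content is identical.
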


Note that if $g_v$ is differentiable and $g_v'$ is positive then the conclusion follows from
implicit function theorem. However, in the problem that we consider $g'_v$ is only non-negative.

\begin{proof}
Suppose, to derive a contradiction, that $v_n \to v_0$ and $c_n := c(v_n)$ does not converge to
$c_0 := c(v_0)$, i.e. there exists $\epsilon > 0$ such that, up to subsequence, $|c_n - c_0| >
\epsilon$. By $(2)$, the sequence $c_n$ is bounded so it converges, again up to subsequence, to some
$c_0' \neq c_0$. By the continuity and injectivity of $G$,
\[
C = G(v_n,c_n) \to G(v_0,c_0') \neq G(v_0,c_0) = C
\]
\end{proof}
We use this abstract lemma for our problem.

\begin{lemma}\label{lemma:functionK}
Fix one of boundary conditions (\ref{BC:Dirichlet-nh}) or (\ref{BC:S-L_I-nh}). Then for every $v
\in C^0([0,1])$ there exist unique constants $c_1(v),c_2(v)$ such that $\hat{K}(v,c_1(v),c_2(v))
\in C^1_{BC}([0,1])$. Moreover, the functions $c_1,c_2\colon C^0([0,1]) \to \R$ are continuous.
\end{lemma}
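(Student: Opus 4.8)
The plan is to reduce the claim to Lemma 2.5 by writing down, for each boundary condition, the pair of scalar equations that $c_1,c_2$ must satisfy and checking that they have the structure required by that lemma. First I would treat the Sturm–Liouville case. Write $u(t)=\hat K(v,c_1,c_2)(t)=c_1+\int_0^t\psi\bigl(\int_0^\tau v(s)\,ds+c_2\bigr)\,d\tau$, so that $\dot u(t)=\psi\bigl(\int_0^t v+c_2\bigr)$. The condition $-\alpha u(0)+\beta\dot u(0)=A$ becomes $-\alpha c_1+\beta\psi(c_2)=A$, which determines $c_1$ explicitly and continuously in terms of $c_2$: $c_1=\tfrac{1}{\alpha}\bigl(\beta\psi(c_2)-A\bigr)$. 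Substituting into the second condition $a u(1)+b\dot u(1)=B$ gives a single scalar equation $G(v,c_2)=B$ where
\[
G(v,c_2)=\frac{a}{\alpha}\bigl(\beta\psi(c_2)-A\bigr)+a\int_0^1\psi\Bigl(\int_0^\tau v(s)\,ds+c_2\Bigr)\,d\tau+b\,\psi\Bigl(\int_0^1 v(s)\,ds+c_2\Bigr).
\]
I then check the hypotheses of Lemma 2.5 with $X=C^0([0,1])$: the map $(v,c_2)\mapsto G(v,c_2)$ is continuous because $\psi=\varphi^{-1}$ is continuous and integration is continuous on $C^0$; for fixed $v$, $c_2\mapsto G(v,c_2)$ is strictly increasing since $\psi$ is strictly increasing and $a,b,\alpha,\beta>0$, and it is a homeomorphism of $\R$ onto $\R$ because $\psi$ is a homeomorphism of $\R$; finally, if $\{v_n\}$ is bounded in $C^0$ and $c_2^{(n)}\to\pm\infty$ then all three arguments of $\psi$ tend to $\pm\infty$ uniformly, so $G(v_n,c_2^{(n)})\to\pm\infty$. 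Lemma 2.5 then gives a continuous $c_2\colon C^0([0,1])\to\R$, and $c_1(v)=\tfrac1\alpha(\beta\psi(c_2(v))-A)$ is continuous as a composition of continuous maps. Uniqueness follows from strict monotonicity of $G(v,\cdot)$ and the explicit formula for $c_1$.

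For the Dirichlet case the reduction is the same in spirit but slightly different in detail: $u(0)=A$ forces $c_1=A$ outright, with no dependence on $c_2$, and $u(1)=B$ becomes the scalar equation $G(v,c_2):=\int_0^1\psi\bigl(\int_0^\tau v(s)\,ds+c_2\bigr)\,d\tau=B-A$. The same three properties — continuity in $(v,c_2)$, strict monotonicity and surjectivity of $G(v,\cdot)$, and the coercivity condition (2) of Lemma 2.5 — hold for exactly the same reasons, so Lemma 2.5 again yields a continuous $c_2$, and $c_1\equiv A$ is trivially continuous.

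I expect the only point requiring any care to be the verification of condition (2) of Lemma 2.5 for the Dirichlet equation: one must argue that $\int_0^1\psi\bigl(\int_0^\tau v_n(s)\,ds+c_2^{(n)}\bigr)\,d\tau$ genuinely diverges when $c_2^{(n)}\to\pm\infty$ and $\|v_n\|_\infty$ stays bounded. This is where boundedness of $\{v_n\}$ is used: since $\bigl|\int_0^\tau v_n\bigr|\le\|v_n\|_\infty\le M$, for large $|c_2^{(n)}|$ the inner argument has the sign of $c_2^{(n)}$ and magnitude at least $|c_2^{(n)}|-M$ for every $\tau\in[0,1]$, and monotonicity of $\psi$ together with $\psi(\pm\infty)=\pm\infty$ forces the integral to $\pm\infty$. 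In the Sturm–Liouville case the boundary term $\tfrac{a\beta}{\alpha}\psi(c_2)+b\psi\bigl(\int_0^1 v_n+c_2\bigr)$ already diverges, so condition (2) is even easier there. Everything else is a routine continuity bookkeeping, and the lemma follows.
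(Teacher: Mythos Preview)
Your proposal is correct and follows essentially the same route as the paper: solve one boundary equation for $c_1$ in terms of $c_2$, substitute into the other to get a single scalar equation $G(v,c_2)=C$, and then invoke Lemma~\ref{lemma:implicitFunction}. The only differences are cosmetic---you treat the Sturm--Liouville case first and you spell out the verification of hypotheses (1) and (2) of Lemma~\ref{lemma:implicitFunction} in more detail than the paper, which simply asserts that ``it is easy to check''.
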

\begin{proof}
Put $u = K(v,c_1,c_2)$. Then
\[
u(0) = c_1, \quad \quad u(1) = c_1 + \int_0^1 \psi(\int_0^\tau v(s) \, ds + c_2)\, d\tau
\]
and
\[
\dot{u}(0) = \psi(c_2), \quad \quad \dot{u}(1) = \psi(\int_0^1 v(s) \, ds + c_2)
\]
For each of the boundary conditions we define a suitable function $G\colon C^0([0,1])\times\R\to\R$
and use Lemma \ref{lemma:implicitFunction} to obtain the statement.

\textbf{Case \eqref{BC:Dirichlet-nh}:}\\
In this case $c_1$ is equal to $A$. Define $G\colon C^0([0,1]) \times \R \to \R$ by
\[
G_1(v,c) = A + \int_0^t \psi(\int_0^\tau v(s) \, ds + c)\, d\tau
\]

\textbf{Case  \eqref{BC:S-L_I-nh}:}\\
From the first equation we get $c_1 = -\frac{A}{\alpha} + \frac{\beta}{\alpha}\psi(c_2)$. Now the
second equation leads to the definition of $G$:
\[
G_2(v,c) := a[ -\frac{A}{\alpha} + \frac{\beta}{\alpha}\psi(c) + \int_0^1 \psi(\int_0^\tau v(s) \,
ds + c)\, d\tau)]
+  \psi(\int_0^1 v(s) \, ds + c)
\]
It is easy to check that the functions $G_i$ satisfy the assumptions of Lemma
\ref{lemma:implicitFunction}.
Therefore both $c_1$ and $c_2$ depend continuously on $v$.
\end{proof}

By Lemma \ref{lemma:functionK}, for given boundary conditions (BC) we have a well defined continuous
function $K\colon C^0([0,1]) \to C_{BC}^1([0,1])$ given by
\[
K(v) = \hat{K}(v,c_1(v),c_2(v)).
\]
Note that the image of $K$ is contained in $C^2$ and, since the inclusion of $C^2$ into $C^1$ is
compact,
so is $K$. Define $N\colon C^1_{BC}([0,1]) \to C^0([0,1])$ by
\[
N(u)(t) = f(t,u,\dot{u}).
\]
\begin{lemma}\label{lemma:fixedPoint}
If $u$ is a fixed point of the composition $K \circ N$ then $u$ is a solution to \eqref{eq:problem}.
\end{lemma}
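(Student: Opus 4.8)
The plan is simply to unwind the definitions of $K$ and $N$ and then invoke the fundamental theorem of calculus twice. Assume $u = K(N(u))$. By construction the image of $K$ lies in $C^1_{BC}([0,1])$, so $u$ is of class $C^1$ and satisfies the prescribed boundary conditions; it therefore remains only to check that $u$ satisfies the differential equation in \eqref{eq:problem}.

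Set $v = N(u)$, i.e. $v(t) = f(t,u(t),\dot u(t))$, which is continuous since $u \in C^1$, and write $c_1 = c_1(v)$, $c_2 = c_2(v)$ for the constants furnished by Lemma \ref{lemma:functionK}. The fixed point identity $u = \hat K(v,c_1,c_2)$ reads
\[
u(t) = c_1 + \int_0^t \psi\Bigl(\int_0^\tau v(s)\,ds + c_2\Bigr)\,d\tau .
\]
Since the integrand $\tau \mapsto \psi\bigl(\int_0^\tau v(s)\,ds + c_2\bigr)$ is continuous, differentiating gives $\dot u(t) = \psi\bigl(\int_0^t v(s)\,ds + c_2\bigr)$, and applying $\varphi = \psi^{-1}$ yields
\[
\varphi(\dot u(t)) = \int_0^t v(s)\,ds + c_2 .
\]
The right-hand side is continuously differentiable in $t$ because $s \mapsto v(s) = f(s,u(s),\dot u(s))$ is continuous. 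Hence $t \mapsto \varphi(\dot u(t))$ is $C^1$, and differentiating once more gives $\frac{d}{dt}\varphi(\dot u(t)) = v(t) = f(t,u(t),\dot u(t))$, that is $\frac{d}{dt}\Phi'(\dot u) = f(t,u,\dot u)$ on $[0,1]$. Together with the boundary conditions already noted, this shows that $u$ solves \eqref{eq:problem}.

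There is no genuine obstacle in this argument; the only point worth a moment's attention is that we never assumed $u$ to be twice differentiable. The regularity that is actually needed, namely $\varphi(\dot u) \in C^1([0,1])$, is produced by the fundamental theorem of calculus applied to the continuous function $v$, and this is precisely the sense in which a solution of \eqref{eq:problem} is to be understood.
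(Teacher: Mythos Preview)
Your argument is correct and is exactly the detailed unpacking of the paper's one-line justification, which merely says that $L_\varphi$ is well defined on the image of $K$ and satisfies $L_\varphi(K(v))=v$. Your two applications of the fundamental theorem of calculus are precisely what verifies this identity, so the approaches coincide.
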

The proof is straightforward if we notice that $L_\varphi$ is well defined on the image of $K$ and
$L_\varphi(K(v)) = v$.
\begin{remark}
$L_\varphi$ is not well defined on the whole of $C^1_{BC}$ as can be seen in the case of the
Laplacian.
\end{remark}

Instead of looking for fixed points of $K\circ N$ one can look for zeros of $Id - K \circ N$. For
this we will use the Leray-Schauder degree and its homotopical invariance. Consider the homotopy
$H\colon [0,1] \times C^1 \to C^1$ given by
\[
H(\lambda,u) = Id - K(\lambda N(u)).
\]
Let $l = K(0)$ be a unique linear function satisfying boundary conditions. If $r > |l|$ then
\[
\deg ((H(0,\cdot),D(r)) = 1
\]
where $D(r) \subset C^1$ is a closed disc of radius $r$. If we prove that there exists $r > |l|$
such that $H(\lambda,u) \neq 0$ for any $\lambda \in [0,1]$ and any $u$ with $|u| = r$ then also
$\deg ((H(1,\cdot),D(r)) = 1$. This would prove that $Id - K \circ N = H(1,\cdot)$ has a zero.

Since $H(\lambda,u) = 0$ if and only if $u$ is a solution to the boundary value problem

\begin{equation}
\label{eq:problem:lambda}
\tag{$P_\lambda$}
\begin{cases}
\frac{d}{dt}\Phi'(\dot{u})=\lambda f(t,u,\dot{u}),\ t\in[0,1]
\\
u\in (BC)
\end{cases}
\end{equation}

we are left only to prove the following.
\begin{lemma}[apriori bounds]
If $u\in C^1$ is a solution to the problem \eqref{eq:problem:lambda} then there exists a constant
$r>0$, independent of $\lambda$ and $u$,  such that
\[
\|u\|_{C^1} \leq r.
\]
\end{lemma}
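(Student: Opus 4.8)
The plan is to prove the bound in two stages, mirroring the classical Bernstein--Nagumo scheme: first an $L^\infty$ bound on $u$ coming from the geometric condition \ref{asm:f:geometric}, and then a Nagumo-type bound on $\dot u$ coming from the growth condition \ref{asm:f:growth}. Throughout, fix a solution $u$ of \eqref{eq:problem:lambda} and set $w=\varphi(\dot u)$. Since $u$ solves the equation, $\dot w(t)=\lambda f(t,u(t),\dot u(t))$ is continuous, so $w\in C^1$; and since $\dot u=\psi(w)$ with $\psi\in C^1$ by \ref{asm:Phi:psi_differentiable}, also $\dot u\in C^1$. If $\lambda=0$ then $w$ is constant and $u$ is the linear function $l=K(0)$, for which the bound is immediate, so assume $\lambda\in(0,1]$.

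\emph{Step 1 ($C^0$ bound).} Let $t_0$ be a point where $u$ attains its maximum on $[0,1]$. If $t_0\in(0,1)$ then $\dot u(t_0)=0$ and $w(t_0)=\varphi(0)=0$; since $\int_{t_0}^t\dot u=u(t)-u(t_0)\le 0$ for $t>t_0$ near $t_0$, there is a sequence $s_n\downarrow t_0$ with $\dot u(s_n)\le 0$, hence $w(s_n)=\varphi(\dot u(s_n))\le 0=w(t_0)$; passing to the limit in the difference quotient for $\dot w$ at $t_0$ yields $\dot w(t_0)\le 0$, i.e. $\lambda f(t_0,u(t_0),0)\le 0$. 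If $u(t_0)>R$ this contradicts \ref{asm:f:geometric}, so $u(t_0)\le R$. If instead $t_0$ is an endpoint, the boundary conditions bound $u(t_0)$: under \eqref{BC:Dirichlet-nh} by $\max(|A|,|B|)$, and under \eqref{BC:S-L_I-nh} by noting that a maximum at $0$ forces $\dot u(0)\le 0$ so $\alpha u(0)=\beta\dot u(0)-A\le -A$ (and symmetrically at $1$), using $\alpha,\beta,a,b>0$. The symmetric argument at a minimum gives the lower bound, so $\|u\|_{C^0}\le M_0$ for a constant $M_0$ depending only on $R$ and the boundary data.

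\emph{Step 2 ($C^1$ bound).} Write $\Phi^*(s)=s\psi(s)-\Phi(\psi(s))$ for the convex conjugate of $\Phi$; then $\Phi^*\ge 0$, $\Phi^*$ is superlinear (as $\Phi$ is finite on all of $\R$), and, crucially, $(\Phi^*)'=\psi$. Let $S_0,T_0>0$ dominate $S,T$ on $[0,1]\times[-M_0,M_0]$; since $\varphi(\dot u)\cdot\dot u-\Phi(\dot u)=\Phi^*(w)$, condition \ref{asm:f:growth} gives $|\dot w(t)|\le S_0\Phi^*(w(t))+T_0$. Next, there is a point $\xi$ with $|\dot u(\xi)|\le c_0$ for a data constant $c_0$: under \eqref{BC:Dirichlet-nh} by the mean value theorem ($\dot u(\xi)=B-A$), and under \eqref{BC:S-L_I-nh} by taking $\xi=0$ and using $\beta\dot u(0)=A+\alpha u(0)$ with $\|u\|_{C^0}\le M_0$. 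Put $c_1=\max\bigl(1,\sup_{|v|\le c_0}|\varphi(v)|\bigr)$, so $|w(\xi)|\le c_1$. Suppose $W:=\max_{[0,1]}w>c_1$, attained at $t_1$, and let $I$ be the interval with endpoints $t_1$ and the point nearest $t_1$ (on the side of $\xi$) at which $w=c_1$; on $I$ we have $w\ge c_1$, hence $\dot u=\psi(w)\ge\psi(c_1)>0$, so $u|_I$ is a strictly increasing $C^1$ diffeomorphism onto its image. Reparametrizing $w$ by $x=u$ on $I$ and using $|dw/dx|=|\dot w|/\dot u\le (S_0\Phi^*(w)+T_0)/\psi(w)$ gives
\[
\int_{c_1}^{W}\frac{\psi(s)}{S_0\Phi^*(s)+T_0}\,ds\ \le\ \mathrm{osc}_{[0,1]}u\ \le\ 2M_0.
\]
Because $(\Phi^*)'=\psi$, the left-hand side equals $\frac{1}{S_0}\log\frac{S_0\Phi^*(W)+T_0}{S_0\Phi^*(c_1)+T_0}$, which tends to $+\infty$ as $W\to\infty$; hence $W$ is bounded above by a data constant. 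The symmetric argument (connecting to $\xi$ through an interval on which $w\le -c_1$ and $\dot u<0$) bounds $\min_{[0,1]}w$ below, so $\|w\|_{C^0}\le W_*$ and therefore $\|\dot u\|_{C^0}=\|\psi(w)\|_{C^0}\le\sup_{|s|\le W_*}|\psi(s)|=:M_1$. Taking $r=M_0+M_1$ finishes the argument.

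\emph{Main obstacle.} Step 1 is the standard maximum-principle argument and uses only \ref{asm:f:geometric}. The real content is Step 2: its hypothesis \ref{asm:f:growth} is exactly the analogue, for the operator $L_\varphi$, of the classical Nagumo condition $\int^{\infty}s\,ds/\phi(s)=\infty$ (and of its known $p$-Laplacian form), and the difficulty is to identify a quantity along solutions whose evolution can be integrated against it. Recognizing that $\varphi(v)v-\Phi(v)=\Phi^*(\varphi(v))$ and that the conjugate satisfies $(\Phi^*)'=\psi=\varphi^{-1}$ is precisely what makes the change-of-variables estimate telescope into a divergent logarithm. The remaining points — producing the auxiliary point $\xi$ uniformly for both boundary conditions, and restricting the reparametrization to subintervals on which $\dot u$ has a fixed sign so that $u$ is genuinely invertible there — are technical but not deep.
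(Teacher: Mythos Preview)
Your two-step scheme matches the paper's, and both steps are correct, but the execution of the $C^0$ bound is genuinely different. The paper (following Frigon--O'Regan) proves that an interior maximum of $|u|$ cannot exceed $R$ via an integration-by-parts identity for $\int_{t_0}^t (t-\sigma)\,u(\sigma)\,\tfrac{d}{d\tau}\varphi(\dot u)\,d\sigma$, balancing several signed integrals to reach a contradiction. Your argument is the straightforward second-derivative test: at an interior max, $\dot u(t_0)=0$, and a one-sided sequence with $\dot u(s_n)\le 0$ forces $\dot w(t_0)=\lambda f(t_0,u(t_0),0)\le 0$, contradicting \ref{asm:f:geometric}. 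This is shorter and more transparent. For the $C^1$ bound the two arguments are essentially the same Nagumo estimate; your packaging via the Legendre conjugate $\Phi^*$ and the identity $(\Phi^*)'=\psi$ makes the change of variables telescope into $\tfrac{1}{S_0}\log\bigl((S_0\Phi^*(W)+T_0)/(S_0\Phi^*(c_1)+T_0)\bigr)$ directly, after which coercivity of $\Phi^*$ bounds $W$. The paper reaches the same logarithmic bound through a Gronwall-type device on $g(\tau)=S_0\int_\mu^\tau \dot u\,|\tfrac{d}{dt}\Phi'(\dot u)|\,d\sigma+C_0$ and then invokes \ref{asm:Phi:nabla2} to pass from a bound on $\Phi'(\dot u)\dot u-\Phi(\dot u)$ to a bound on $\Phi(\dot u)$; your route sidesteps \ref{asm:Phi:nabla2} altogether, which is a mild gain in generality.
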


Next section is devoted to proof of this lemma.

\begin{remark}
In \cite{KelTer13,MaZhaLiu15}, authors use homotopy $H(\lambda,u) = \lambda K(N(u)) + (1-\lambda)l$.
Although $H(\lambda,u)$ satisfies boundary conditions for every $\lambda$, fixed points of
$H(\lambda,\cdot)$ does not have to be solutions to the parametrized problem, as claimed by the
authors.
\end{remark}

\section{A priori bounds}

We start by noticing that if $u$ is a $C^1$ solution to the problem \eqref{eq:problem:lambda} then
$u \in C^2$. Indeed $\dot{u}$ reads
\[
\dot{u}(t) = \psi\left( \int_0^t \lambda f(\tau,u,\dot{u}) \,d\tau + c \right)
\]
and by the assumption \ref{asm:Phi:psi_differentiable} and the continuity of $f$ it is continuously
differentiable.

The next lemma is an adaptation of Theorem 3.3 \cite{FriORe88}.

\begin{lemma}
    \label{lem:bound_for_u_2}
If $|u|$ achieves its maximum at $t_0\in (0,1)$ then
\[
|u(t)|\leq R, \text{ for $t\in [0,1]$}
\]
\end{lemma}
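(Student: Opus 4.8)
The plan is to argue by contradiction, supposing that $M := \max_{t \in [0,1]} |u(t)| = |u(t_0)| > R$ for some interior point $t_0 \in (0,1)$. Without loss of generality assume $u(t_0) = M > R$ (the case $u(t_0) = -M < -R$ is symmetric, using the other half of the sign condition in \ref{asm:f:geometric}). Since $t_0$ is an interior maximum of the $C^2$ function $u$, we have $\dot u(t_0) = 0$. The strategy is to show that this forces $u$ to stay $> R$ and strictly increasing (or decreasing) near $t_0$ in a way that contradicts $t_0$ being a \emph{maximum}.

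First I would compute, at $t_0$, the quantity $\frac{d}{dt}\Phi'(\dot u)\big|_{t_0} = \lambda f(t_0, u(t_0), \dot u(t_0)) = \lambda f(t_0, u(t_0), 0)$. By \ref{asm:f:geometric}, since $u(t_0) = M > R$, we get $u(t_0) f(t_0, u(t_0), 0) > 0$, hence $f(t_0, u(t_0), 0) > 0$, so $\frac{d}{dt}\Phi'(\dot u)\big|_{t_0} \geq 0$; if $\lambda = 0$ the equation degenerates and one handles that trivial case separately (then $u$ is linear, a linear function attaining an interior max is constant, equal to a boundary value; here one needs a short separate remark). For $\lambda > 0$ this derivative is strictly positive. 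Now set $w(t) = \Phi'(\dot u(t))$, which is $C^1$; we have $w(t_0) = \Phi'(0) = 0$ by $(\Phi_2)$ and $\dot w(t_0) > 0$. Therefore $w$ is strictly negative just to the left of $t_0$ and strictly positive just to the right. Since $\Phi'$ is an increasing homeomorphism with $\Phi'(0)=0$, the sign of $\dot u(t) = \psi(w(t))$ matches the sign of $w(t)$: so $\dot u < 0$ immediately left of $t_0$ and $\dot u > 0$ immediately right of $t_0$. But that makes $t_0$ a strict local \emph{minimum} of $u$ in a punctured neighborhood, contradicting $u(t_0) = M$ being the global maximum (as long as $u$ is not constant near $t_0$, which it is not since $\dot u$ changes sign). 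This contradiction shows the maximum of $|u|$ cannot be attained at an interior point with value exceeding $R$, giving $|u(t)| \le R$ for all $t$.

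I expect the main obstacle to be making the ``near $t_0$'' sign analysis fully rigorous without assuming $f(t,u(t),0)$ stays positive away from $t_0$ — i.e.\ bridging from the pointwise information at $t_0$ to a genuine neighborhood statement. The clean fix is exactly the $w = \Phi'(\dot u)$ reformulation above: $w \in C^1$, $w(t_0) = 0$, $\dot w(t_0) > 0$ gives strict monotonicity of $w$ through $t_0$ by continuity of $\dot w$ alone, with no further appeal to $f$. A secondary technical point is the degenerate case $\lambda = 0$ and, more generally, ensuring the argument does not secretly need $\dot w(t_0) \ne 0$ when $f(t_0,u(t_0),0)$ could be zero — but \ref{asm:f:geometric} guarantees strict positivity there, so this does not arise for $\lambda>0$. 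One should also double-check the symmetric case $u(t_0) = -M$: then $f(t_0,u(t_0),0) < 0$, so $\dot w(t_0) < 0$, $w > 0$ left of $t_0$ and $w < 0$ right of $t_0$, hence $\dot u > 0$ left and $\dot u < 0$ right, making $t_0$ a strict local max of $u$ — but $u(t_0) = -M$ is the global \emph{minimum}, again a contradiction. Thus in all cases the bound $|u| \le R$ holds.
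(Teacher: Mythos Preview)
Your argument is correct and considerably shorter than the paper's. The paper derives an integration-by-parts identity
\[
\int_{t_0}^{t}(t-\sigma)\Big(\lambda u f(\sigma,u,\dot u)+\dot u\,\varphi(\dot u)\Big)\,d\sigma
=\int_{t_0}^{t} u\,\varphi(\dot u)\,d\sigma,
\]
and then argues, using continuity of $f$ and the monotonicity of $u$ on either side of $t_0$, that for $t$ close to $t_0$ the left side is strictly positive while the right side is $\leq 0$. This follows the approach of \cite{FriORe88} (Theorem~3.3) and works without ever differentiating $w=\Phi'(\dot u)$ explicitly.

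Your route is the classical maximum-principle idea carried out on $w=\Phi'(\dot u)$ rather than on $u$ itself: since the equation says $\dot w=\lambda f(t,u,\dot u)$, $w$ is $C^1$ with $w(t_0)=0$ and $\dot w(t_0)=\lambda f(t_0,u(t_0),0)>0$ by \ref{asm:f:geometric}, so $w$ (and hence $\dot u=\psi(w)$) changes sign from $-$ to $+$ at $t_0$, forcing $t_0$ to be a strict local minimum of $u$ rather than a maximum. This is more elementary and needs neither the integral identity nor the auxiliary interval $(t_0^-,t_0^+)$; the paper's identity, on the other hand, is a reusable tool in the spirit of \cite{FriORe88} that generalizes to settings where the pointwise sign of $\dot w(t_0)$ may be harder to read off. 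Your handling of $\lambda=0$ as a separate degenerate case is fine; the paper likewise restricts to $0<\lambda\leq 1$ here and treats $\lambda=0$ in the next lemma.
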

\begin{proof}
Suppose that $|u|$ achieves its maximum at $t_0\in (0,1)$. We can assume that $u(t_0)>R$. In the
case $u(t_0)\leq -R$ the proof is similar. Since $t_0\in (0,1)$, $\dot{u}(t_0)=0$. Let $t\in [0,1]$,
then
\begin{multline*}
\int_{t_0}^t
    (t-\sigma) u(\sigma) \frac{d}{d \tau} \varphi(\dot{u})(\tau)\Big\vert_{\tau=\sigma}
    \,d\sigma
=
    t \int_{t_0}^t u\frac{d}{d\tau}\varphi(\dot{u}) \,d\sigma
    -
    \int_{t_0}^t \sigma u\frac{d}{d\tau}\varphi(\dot{u})  \,d\sigma
= \\ =
    t\left(
        u\varphi(\dot{u})\Big\vert_{t_0}^t -  \int_{t_0}^t \dot{u}\varphi(\dot{u}) \,d\sigma
    \right)
    -
    \left(
        \sigma u\varphi(\dot{u})\Big\vert_{t_0}^t
        -
        \int_{t_0}^t (u+\sigma \dot{u})\varphi(\dot{u}) \,d\sigma
    \right)
= \\ =
    t u(t) \varphi(\dot u(t))
    -
    t \int_{t_0}^t \dot{u}\varphi(\dot{u})  \,d\sigma
    -
    t u(t) \varphi(\dot u(t))
    +
    \int_{t_0}^t u \varphi(\dot u) \,d\sigma
    +
    \int_{t_0}^t \sigma \dot{u}\varphi(\dot{u}) \,d\sigma
= \\ =
    \int_{t_0}^t u \varphi(\dot u) \,d\sigma
    +
    \int_{t_0}^t (\sigma-t) \dot{u}\varphi(\dot{u}) \,d\sigma
\end{multline*}
Hence, using \eqref{eq:problem:lambda},
\begin{equation*}
\int_{t_0}^t
    (t-\sigma)  \Big(
     \lambda u(\sigma) f\big(\sigma,u(\sigma),\dot{u}(\sigma)\big)
        +
        \dot{u}(\sigma)\varphi\big(\dot{u}(\sigma)\big)
    \Big)\,d\sigma
=
\int_{t_0}^t u(\sigma) \varphi(\dot u(\sigma)) \,d\sigma
\end{equation*}

Note that for $0<\lambda\leq 1$, $xf(t,x,0)>0$, $|x|>R$ implies $\lambda xf(t,x,0)>0$, $|x|>R$.
Thus, by the assumption \ref{asm:f:geometric}, $\lambda u(t_0)f(t_0,u(t_0),0)>0$. The continuity
of $f$, $u$ and $\dot{u}$ implies that there exists a neighborhood $N$ of $(t_0,u(t_0),0)$ such that
\[
\lambda u(t) f(t,u(t),\dot{u}(t))>0\quad \text{ for $(t,u(t),\dot{u}(t)\in N.$}
\]

Since $u\in C^1$ and achieves its maximum at $t_0$, there exist $t_0^-$ and $t_0^+$ such
that
\begin{itemize}
\item $u(t)>R$ for $t\in (t_0^-,t_0^+)$,
\item $\dot{u}$ is non-negative on $(t_0^-,t_0]$,
\item $\dot{u}$ is non-positive on $(t_0^-,t_0]$.
\end{itemize}
Hence $\varphi(\dot{u}(t))\geq 0$ for $t\in (t_0^-,t_0]$ and $\varphi(\dot{u}(t))\leq 0$ for $t\in
[t_0,t_0^+)$.
This implies that
\begin{equation*}
\int_{t_0}^t (t-\sigma)\dot{u}(\sigma) \varphi(\dot u(\sigma)) \,d\sigma \geq 0
\text{ for $t\in (t_0^-,t_0^+)$}
\end{equation*}
and
\begin{equation*}
\int_{t_0}^t u(\sigma) \varphi(\dot u(\sigma)) \,d\sigma \leq 0
\text{ for $t\in (t_0^-,t_0^+).$}
\end{equation*}
It follows that for $t$ close to $t_0$
\[
0<
\int_{t_0}^t
    (t-\sigma)  \Big(
   \lambda u(\sigma) f\big(\sigma,u(\sigma),\dot{u}(\sigma)\big)
        +
        \dot{u}(\sigma)\varphi\big(\dot{u}(\sigma)\big)
    \Big)\,d\sigma
=
\int_{t_0}^t u(\sigma) \varphi(\dot u(\sigma)) \,d\sigma
\leq 0,
\]
a contradiction. Thus $u(t_0)\leq R$.
\end{proof}

\begin{lemma}
There exists a constant $r_0>0$, independent of $u$ and $\lambda$ such that
\[
|u(t)|\leq r_0 \text{ for all $t\in [0,1]$}
\]
\end{lemma}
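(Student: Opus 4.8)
The plan is to bound $|u(t)|$ uniformly by splitting into cases according to where $|u|$ attains its maximum. If the maximum is attained at an interior point, Lemma~\ref{lem:bound_for_u_2} already gives $\|u\|_\infty \le R$, so the only work is to handle the case when the maximum occurs at an endpoint $t=0$ or $t=1$. Here the argument depends on the boundary conditions, and I would treat \eqref{BC:Dirichlet-nh} and \eqref{BC:S-L_I-nh} separately.

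For the Dirichlet case the bound is immediate: if $\max_{[0,1]}|u|$ is attained at an endpoint, then $\|u\|_\infty \le \max\{|A|,|B|\}$, so in all cases $\|u\|_\infty \le \max\{R,|A|,|B|\} =: r_0$, which is independent of $u$ and $\lambda$. For the Sturm--Liouville case I would argue as follows. Suppose the maximum of $u$ (the case $\min u$, i.e. $-u$ large, is symmetric) is attained at $t=0$ with $u(0) > R$; I want to derive a contradiction or an a priori bound. Since $t=0$ is a maximum of $u$ on $[0,1]$ we have $\dot u(0) \le 0$. The boundary condition $-\alpha u(0) + \beta \dot u(0) = A$ with $\alpha,\beta>0$ then forces $\dot u(0) = (A + \alpha u(0))/\beta$; if $u(0)$ is large and positive this makes $\dot u(0)$ large and positive, contradicting $\dot u(0)\le 0$ as soon as $u(0) > \max\{R, -A/\alpha\}$ (one also checks the borderline via the sign of $f$ as in Lemma~\ref{lem:bound_for_u_2}). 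Hence $u(0) \le \max\{R,-A/\alpha\}$. Symmetrically, if the maximum is attained at $t=1$ then $\dot u(1)\ge 0$, and $a u(1) + b\dot u(1) = B$ with $a,b>0$ gives $u(1) = (B - b\dot u(1))/a \le B/a$, so $u(1)\le \max\{R, B/a\}$. Combining the interior case with the two endpoint cases, and doing the analogous estimates for $-u$, yields
\[
|u(t)| \le r_0 := \max\Big\{R,\ \frac{|A|}{\alpha},\ \frac{|B|}{a}\Big\}
\]
for all $t\in[0,1]$, independent of $u$ and $\lambda$.

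The step I expect to need the most care is the endpoint analysis in the Sturm--Liouville case: one must be careful that $\dot u(0)\le 0$ (resp. $\dot u(1)\ge 0$) really holds when $t=0$ (resp. $t=1$) is a one-sided maximum, and one should rule out the degenerate possibility that the maximum is attained \emph{only} at an endpoint with $u$ strictly below $R$ nearby — but in fact if the global maximum is at an endpoint there is nothing further to prove, and if it is interior Lemma~\ref{lem:bound_for_u_2} applies, so the case distinction is genuinely exhaustive. The remaining possibility, that $|u|$ achieves its maximum at an endpoint but the value there does not immediately yield the bound, is handled by the sign condition \ref{asm:f:geometric} exactly as in the proof of Lemma~\ref{lem:bound_for_u_2}, ensuring the monotonicity of $\dot u$ near the endpoint has the right sign. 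The constant $r_0$ produced this way depends only on $R$ and the boundary data, not on $\lambda\in[0,1]$ or on the particular solution $u$.
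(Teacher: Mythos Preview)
Your proposal is correct and follows essentially the same route as the paper: split according to whether $|u|$ attains its maximum in the interior (where Lemma~\ref{lem:bound_for_u_2} gives $|u|\le R$) or at an endpoint, and in the Sturm--Liouville case use the sign of $\dot u$ at a one-sided extremum together with the boundary relation to bound $|u(0)|\le |A|/\alpha$, $|u(1)|\le |B|/a$. The paper packages the endpoint step more cleanly via the single observation $u(0)\dot u(0)\le 0$ (so $\beta u(0)\dot u(0)=u(0)(A+\alpha u(0))\le 0$ forces $|u(0)|\le |A|/\alpha$), and your appeals to \ref{asm:f:geometric} at the boundary are unnecessary---the endpoint bounds come purely from the boundary conditions, with no use of $f$.
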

\begin{proof}

If $\lambda=0$ then the problem \eqref{eq:problem:lambda} has a unique solution and thus
$|u(t)|\leq C$ for some constant $C\geq 0$. Let $0<\lambda\leq 1$.

Assume that $u$ satisfies \eqref{BC:Dirichlet-nh}. If $|u|$ achieves its
maximum at $t_0=0$ (respectively $t_0=1$) then $|u(t)|\leq |A|$ (resp. $|u(t)|\leq |B|$). If the
maximum
is achieved in $t_0\in (0,1)$ then by Lemma \ref{lem:bound_for_u_2} we get $|u(t)|\leq R$. Hence
\[
|u(t)|\leq K_0 = \max\{R,|A|,|B|\}.
\]

Assume that $u$ satisfies \eqref{BC:S-L_I-nh}. If $|u|$ has its maximum value at $0$ then
$u(0)\dot{u}(0)\leq 0$. The boundary conditions give
\[
u(0)(A+\alpha u(0)) = \beta u(0)\dot{u}(0)\leq 0
\]
and consequently $|u(0)|\leq |A/\alpha|$. A similar argument shows that $|u(1)|\leq |B/a|$.
If the maximum is achieved in $t_0\in (0,1)$ then by Lemma \ref{lem:bound_for_u_2} we get
$|u(t)|\leq M$. Finally
\[
|u(t)|\leq r_0 = \max\{M,|A/\alpha|,|B/a|\}.
\]
\end{proof}

Now we provide bounds for $\dot{u}$. The proof of the following theorem is based on \cite{FriORe88}.

\begin{lemma}
There exists a constant $r_1>0$ (depending only on $r_0$, $A$, $B$ and $\Phi)$ such that
\[
|\dot{u}(t)|\leq r_1 \text{ for all $t\in [0,1]$}
\]
\end{lemma}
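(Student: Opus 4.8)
The plan is a Bernstein–Nagumo type argument tailored to the operator $L_\varphi$. Recall that a $C^1$ solution of \eqref{eq:problem:lambda} is in fact $C^2$, and that by the previous lemma $|u(t)|\le r_0$ for all $t$; hence, since $S$ and $T$ are bounded on bounded sets, the quantities $S_0:=\sup\{S(t,x):t\in[0,1],\,|x|\le r_0\}$ and $T_0:=\sup\{T(t,x):t\in[0,1],\,|x|\le r_0\}$ are finite, and we may take them positive.

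First I would introduce the ``energy'' $w(t):=\Phi'(\dot u(t))\,\dot u(t)-\Phi(\dot u(t))$. By strict convexity of $\Phi$ together with $\Phi(0)=0$ one has $w\ge 0$. Since $t\mapsto\Phi'(\dot u(t))$ is $C^1$ (it equals $c+\int_0^t\lambda f(\tau,u,\dot u)\,d\tau$) and $\Phi$ is differentiable, a direct computation using the equation gives $\dot w=\big(\tfrac{d}{dt}\Phi'(\dot u)\big)\dot u=\lambda f(t,u,\dot u)\,\dot u$, whence, by $(f_2)$ and the bound on $u$,
\[
|\dot w(t)|\le |f(t,u(t),\dot u(t))|\,|\dot u(t)|\le\big(S_0\,w(t)+T_0\big)\,|\dot u(t)| .
\]
As $w\ge 0$ and $S_0,T_0>0$, this rearranges to $\big|\tfrac{d}{dt}\ln(S_0 w+T_0)\big|\le S_0\,|\dot u|$.

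The Nagumo step is the following observation: if $J\subseteq[0,1]$ is a subinterval on which $\dot u$ does not change sign, then for $t_a,t_b\in J$,
\[
\big|\ln(S_0 w(t_b)+T_0)-\ln(S_0 w(t_a)+T_0)\big|\le S_0\!\int_J|\dot u|\,dt=S_0\,|u(t_b)-u(t_a)|\le 2S_0 r_0 ,
\]
because on such a $J$ the integral of $|\dot u|$ equals the variation of $u$, controlled by $r_0$. Now, by the mean value theorem there is $t_1\in[0,1]$ with $\dot u(t_1)=u(1)-u(0)$, so $|\dot u(t_1)|\le 2r_0$. Fix an arbitrary $t^*\in[0,1]$. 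If $|\dot u(t^*)|\le 2r_0$, then $w(t^*)\le W_0:=\max_{|v|\le 2r_0}\big(\Phi'(v)v-\Phi(v)\big)$. Otherwise, since $|\dot u(t^*)|>2r_0\ge|\dot u(t_1)|$, the intermediate value theorem provides a point $\bar t$ between $t_1$ and $t^*$ with $\dot u(\bar t)$ of the same sign as $\dot u(t^*)$ and $|\dot u(\bar t)|=2r_0$, which can be chosen so that $\dot u$ keeps constant sign on the subinterval $J$ bounded by $\bar t$ and $t^*$; the displayed estimate on $J$ then gives $S_0 w(t^*)+T_0\le e^{2S_0 r_0}\big(S_0 w(\bar t)+T_0\big)\le e^{2S_0 r_0}\big(S_0 W_0+T_0\big)$. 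In either case $w(t)\le W$ for all $t\in[0,1]$, where $W$ depends only on $r_0$, $\Phi$, $S_0$, $T_0$.

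It remains to convert the bound on $w$ into a bound on $\dot u$. By $(\Phi_4)$ and $(\Phi_1)$,
\[
w(t)=\Phi'(\dot u(t))\dot u(t)-\Phi(\dot u(t))\ge (k_\Phi-1)\,\Phi(\dot u(t)),
\]
and the right-hand side tends to $\infty$ as $|\dot u(t)|\to\infty$; hence the sublevel set $\{v\in\R:\Phi'(v)v-\Phi(v)\le W\}$ is bounded, and any bound $r_1$ for it satisfies $|\dot u(t)|\le r_1$ for all $t$, as desired. I expect the heart of the argument to be the Nagumo step — locating the ``anchor'' $t_1$ where $\dot u$ is small and then extracting a constant-sign subinterval joining it to an arbitrary $t^*$ — since this is precisely what lets one integrate the growth condition $(f_2)$ against $du$ rather than $dt$; that $(f_2)$ bounds $|f|$ by the very same coercive quantity $\Phi'(v)v-\Phi(v)$ that appears in the energy $w$ is what makes the estimate close.
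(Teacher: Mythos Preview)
Your argument is correct and follows essentially the same Bernstein--Nagumo strategy as the paper: both anchor $\dot u$ at a point where it is small (via the mean value theorem and the bound $|u|\le r_0$), restrict to a subinterval of constant sign, and integrate the differential inequality coming from $(f_2)$ against $du$ to obtain a Gronwall-type bound on $\Phi'(\dot u)\dot u-\Phi(\dot u)$, which $(\Phi_4)$ then converts into a bound on $|\dot u|$. Your packaging via the energy $w$ and the explicit IVT construction of $\bar t$ is a bit cleaner than the paper's formulation with the auxiliary function $g$ and the interval $[\mu,\nu]$, but the substance is the same.
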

\begin{proof}
Since we have obtained a priori bounds $|u(t)|\leq r_0$, it is easy to observe that there exists
a constant $C\geq 0$ independent of $\lambda$ and $u$, such that
\[
|\dot{u}(t_0)|\leq C
\]
for some $t_0\in [0,1]$. The point $t_0$ belongs to an interval $[\mu,\nu]\subset [0,1]$ such that
the
sign of  $\dot{u}(t)$ does not change in $[\mu,\nu]$ and
$\dot{u}(\mu)=\dot{u}(t_0)$ and/or $\dot{u}(\nu)=\dot{u}(t_0)$.

Assume that $\dot{u}(\mu)=\dot{u}(t_0)$ and $\dot{u}(t)\geq 0$ for every $t\in[\mu,\nu]$. The other
cases are treated similarly and the same bound is obtained.

Denote by $S_0$, $T_0$ the upper bounds of $A$ and $B$ respectively on $[0,1]\times
[-r_0,r_0]$. Since
\[
|\lambda f(t,u,\dot{u})|\leq S_0 (\Phi'(\dot{u})\dot{u}-\Phi(\dot{u}))+T_0,
\]
we have
\[
\int_\mu^t
\frac{
    S_0\,\dot{u}\, \left|\frac{d}{dt}\Phi'(\dot{u})\right|
} {
    S_0(\Phi'(\dot{u})\dot{u}-\Phi(\dot{u}))+T_0
}\,d\tau
\leq
S_0 \int_\mu^t \,\dot{u}\, d\tau
\leq 2 S_0 r_0
\]
For $\mu\leq \tau \leq t$, we have

\begin{multline*}
\left( \Phi'(\dot{u}(\tau))\dot{u}(\tau)-\Phi(\dot{u}(\tau)) \right)
-
\left( \Phi'(\dot{u}(\mu))\dot{u}(\mu)-\Phi(\dot{u}(\mu)) \right)
=\\=
\int_\mu^t \frac{d}{dt}\left( \Phi'(\dot{u}(\tau))\dot{u}(\tau)-\Phi(\dot{u}(\tau))
\right)|_{t=\sigma} \,dt
=
\int_\mu^\tau \dot{u}\,\frac{d}{dt}\Phi'(\dot{u}) \,d\sigma.
\end{multline*}
There exists $C_0\geq 0$ such that
$0 \leq S_0\left(\Phi'(\dot{u}(\mu))\dot{u}(\mu)-\Phi(\dot{u}(\mu))\right)+T_0 \leq C_0$.
Hence,
\[
0 \leq
S_0 \left( \Phi'(\dot{u}(\tau))\dot{u}(\tau)-\Phi(\dot{u}(\tau)) \right)+T_0
\leq
S_0  \int_\mu^\tau \dot{u}\,\left|\frac{d}{dt}\Phi'(\dot{u})\right| \,d\sigma + C_0 + T_0.
\]
Set $g(\tau) = S_0 \int_\mu^\tau \dot{u}\,\left|\frac{d}{dt}\Phi'(\dot{u})\right| \,d\sigma + C_0$,
then integration by substitution yields
\[
\log\left(\frac{g(t)+T_0}{C_0+T_0}\right)
=
\int_{C_0}^{g(t)}\frac{1}{x +T_0}\,dx
=
\int_\mu^t
\frac{S_0\,\dot{u}\,\frac{d}{dt}\Phi'(\dot{u})}{g(\tau) +T_0}\,d\tau
\leq 2 S_0 r_0.
\]
Thus
\[
g(t)\leq (T_0+C_0) e^{2 S_0 r_0} - T_0
\]
and by \ref{asm:Phi:nabla2}
\[
(k_\Phi-1)\Phi(\dot{u}(t))
\leq
\Phi'(\dot{u}(t))\dot{u}(t)-\Phi(\dot{u}(t))
\leq
\frac{1}{S_0} ((T_0+C_0) e^{2 S_0 r_0} - T_0)
\]
The last inequality gives $|\dot{u}(t)| \leq r_1$ for all $t\in [0,1]$.
\end{proof}


\bibliographystyle{elsarticle-num}
\bibliography{final_for_arxive}

\end{document}